\newtheorem{thm}{Theorem}[section]
\newtheorem{lem}[thm]{Lemma}
\newtheorem{rem}[thm]{Remark}
\newtheorem{ex}[thm]{Example}
\newtheorem{prop}[thm]{Proposition}
\newtheorem{acknowledgment}{Acknowledgment}
\numberwithin{equation}{section}
\newcommand{\C}{{\mathbb C}}
\newcommand{\D}{{\mathbb D}}
\newcommand{\B}{{\mathbb B}}
\newcommand{\cS}{{\mathcal S}}
\newcommand{\cR}{{\mathcal R}}
\newcommand{\cSB}{{\mathcal S}_{B}}
\newcommand{\cH}{{\mathcal H}}
\newcommand{\cK}{{\mathcal K}}
\newcommand{\ran}{\operatorname{ran}}
\newcommand{\la}{\langle}
\newcommand{\ra}{\rangle}
\newcommand{\lam}{\lambda}
\newcommand{\w}{\omega}
\newcommand{\blam}{\boldsymbol{\lam}}
\newcommand{\bmu}{\boldsymbol{\mu}}
\newcommand{\bnu}{\boldsymbol{\nu}}
\newcommand{\bw}{\boldsymbol{\omega}}
\newcommand{\bz}{\boldsymbol{z}}
\newcommand{\by}{\boldsymbol{y}}
\newcommand{\bx}{\boldsymbol{x}}
\newcommand{\ba}{\boldsymbol{a}}
\newcommand{\bb}{\boldsymbol{b}}
\newcommand{\0}{\boldsymbol{0}}
\newcommand{\bu}{\boldsymbol{u}}
\newcommand{\bbw}{\boldsymbol{w}}
\begin{document}

\title{Indefinite structure of the Bergman kernel\\ on the open unit disk
}
\author{
{\sc Kenta KOJIN}\\
[1ex]
{\small Japan Coast Guard Academy, Kure 737-8512, Japan}\\
{\small
{\it E-mail address}: {\tt k-koujin@jcga.ac.jp}}\\
[1ex]
{\sc Shuhei KUWAHARA}\\
[1ex]
{\small Sapporo Seishu High School, Sapporo 064-0916, Japan} \\
{\small 
{\it E-mail address}: {\tt s.kuwahara@sapporoseishu.ed.jp}}
\\
[1ex]
and\\
[1ex]
{\sc Michio SETO}
\\
[1ex]
{\small National Defense Academy of Japan,  
Yokosuka 239-8686, Japan} \\
{\small 
{\it E-mail address}: {\tt mseto@nda.ac.jp}}
}

\date{}

\maketitle
\begin{abstract}
In this paper, 
we focus on an indefinite structure lying behind the Bergman kernel on the open unit disk. 
In particular, an invariant distance, birational maps 
and an indefinite kernel are constructed from the Bergman kernel, 
and we deal with their interaction.  
\end{abstract}

\begin{center}
2010 Mathematical Subject Classification: Primary 46E22; Secondary 46C20\\
keywords: Bergman kernel, Kre\u{\i}n space
\end{center}

\section{Introduction}
Let $\D$ denote the open unit disk in the complex plane $\C$, and let $k$ denote the Bergman kernel on $\D$, 
that is, we set
\[
k(z,\lam)=\dfrac{1}{(1-\overline{\lam}z)^2}\quad (z,\lam \in \D).
\]
Operator theory induced by the Bergman kernel has been studied in a number of papers, 
for example, 
Barbian~\cite{Barbian}, Chailos~\cite{Chailos1, Chailos2}, McCullough--Richter~\cite{MR}, Yang--Zhu~\cite{YZ}. 
The purpose of this paper is to give a new point of view to functional analysis related to the Bergman kernel. 
In order to state our main result, we need a map and a matrix. 
Let $\Phi$ denote the nonlinear transformation defined as follows: 
\[
\Phi:\D \to \C^2,\quad \lam\mapsto 
\begin{pmatrix}
\sqrt{2}\lam\\
\lam^2
\end{pmatrix}.
\]
For any $(\lam_1,\lam_2)^{\top}$ and $(\mu_1,\mu_2)^{\top}$ in $\C^2$, we set 
\[
\la (\lam_1,\lam_2)^{\top},(\mu_1,\mu_2)^{\top} \ra_{\cK}=\lam_1\overline{\mu_1}-\lam_2\overline{\mu_2}
\]
and consider the indefinite inner product space $\cK=(\C^2,\la \cdot,\cdot\ra_{\cK})$.  
Then, since 
\[
1-\la \Phi(\lam), \Phi(\mu) \ra_{\cK}=1-(2\lam\overline{\mu}-\lam^2\overline{\mu^2})
=(1-\lam\overline{\mu})^2, 
\]
we have that
\[
k(z,\lam)=\dfrac{1}{1-\la \Phi(z), \Phi(\lam) \ra_{\cK}}.
\]
Moreover, we set 
\[
M_{\Phi}(\lam_1,\lam_2;\mu_1,\mu_2)=
\begin{pmatrix}
\dfrac{1-\la \Phi(\mu_1), \Phi(\mu_1)\ra_{\cK}}{1-\la \Phi(\lam_1),\Phi(\lam_1) \ra_{\cK}} 
& \dfrac{1-\la \Phi(\mu_1), \Phi(\mu_2) \ra_{\cK}}{1-\la \Phi(\lam_1),\Phi(\lam_2) \ra_{\cK}}\\[0.5cm]
\dfrac{1-\la \Phi(\mu_2), \Phi(\mu_1) \ra_{\cK}}{1-\la \Phi(\lam_2), \Phi(\lam_1) \ra_{\cK}}
& \dfrac{1-\la \Phi(\mu_2), \Phi(\mu_2)\ra_{\cK}}{1-\la \Phi(\lam_2),\Phi(\lam_2)\ra_{\cK}}
\end{pmatrix}.
\]
As a part of the main result (Theorem \ref{thm:4-1}), 
under the condition that $M_{\Phi}(\lam_1,\lam_2;\w_1,\w_2)$ is positive semi-definite,  
we prove not only that the two-point Pick problem $\lam_j\mapsto \omega_j$ $(j=1,2)$ is solvable, 
but also that an indefinite two-point Pick problem is solvable as follows: 
there exists a rational map $F(z_1,z_2)=(f_1(z_1,z_2),f_2(z_1,z_2))$ 
such that 
\begin{enumerate}
\item $F(\Phi(\lam_j))=\Phi(\w_j)$ for any $j=1,2$, 
\item $|f_1(z_1,z_2)|^2-|f_2(z_1,z_2)|^2<1$
if $|z_1|^2-|z_2|^2<1$. 
\end{enumerate}
We note that the Bergman kernel does not have the two-point Pick property.

This paper is organized as follows. 
In Section 2, an invariant distance on $\D$ is introduced and 
a Schwarz--Pick type inequality is discussed.  
In Section 3, we deal with birational self-maps of a hyperbolic ball. 
In Section 4, it is proved that 
the Bergman space is isometrically embedded into a reproducing kernel Kre\u{\i}n space.  
In Section 5, we discuss structure of Pick matrices and prove the main theorem. 

\section{Invariant distance}
We set 
\[
\rho(\lam,\mu)=\sqrt{1-\dfrac{|k(\lam,\mu)|^2}{k(\lam,\lam)k(\mu,\mu)}}\quad (\lam,\mu \in \D).
\]
Then, $\rho$ is a distance on $\D$ by Lemma 9.9 of Agler--McCarthy~\cite{AM}. 
Moreover, a direct calculation shows that
\begin{equation}\label{eq:1-1}
\rho(\lam,\mu)=\sqrt{2\left|\dfrac{\lam-\mu}{1-\overline{\mu}\lam}\right|^2
-\left|\dfrac{\lam-\mu}{1-\overline{\mu}\lam}\right|^4}.
\end{equation}
Hence $\rho$ is invariant under the action of the automorphism group of $\D$.
Let $\cS$ denote the Schur class, that is, we write
\[
\cS=\left\{f \in \operatorname{Hol}(\D): \sup_{\lam \in \D}|f(\lam)|\leq 1\right\}.
\]

\begin{prop}\label{prop:1-1}
Let $f$ be a function in $\cS$ that is not a constant with absolute value one. 
Then, for any two points $z$ and $w$ in $\D$,
\[
\rho(f(z),f(w))\leq \rho(z,w).
\] 
Moreover, $f$ is isometric with respect to $\rho$ if and only if $f$ is an automorphism of $\D$. 
\end{prop}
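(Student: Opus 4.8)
We would reduce the statement to the classical Schwarz--Pick lemma through the substitution exhibited in \eqref{eq:1-1}. Write $d(\lam,\mu)=\left|\dfrac{\lam-\mu}{1-\overline{\mu}\lam}\right|$ for the pseudohyperbolic distance on $\D$, and set $\phi(t)=\sqrt{2t^{2}-t^{4}}$ for $t\in[0,1]$, so that \eqref{eq:1-1} reads simply $\rho(\lam,\mu)=\phi(d(\lam,\mu))$. The first step is to record that $\phi$ is strictly increasing on $[0,1]$: its square $\phi(t)^{2}=t^{2}(2-t^{2})$ has derivative $4t(1-t^{2})$, which is nonnegative on $[0,1]$ and vanishes only at the two endpoints, so $t\mapsto\phi(t)^{2}$, and hence $\phi$ itself, is strictly increasing there; in particular $\phi$ is injective on $[0,1]$.

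The second step is to pass from $\rho$ to $d$. By hypothesis $f\in\cS$ is not a constant of modulus one: if $f$ is a constant of modulus strictly less than one the claimed inequality is trivial, so we may assume $f$ is non-constant, and then the maximum modulus principle forces $f(\D)\subseteq\D$. Consequently $d(f(z),f(w))$ and $d(z,w)$ both lie in $[0,1)$. The classical Schwarz--Pick lemma gives $d(f(z),f(w))\leq d(z,w)$ for all $z,w\in\D$, and applying the monotone function $\phi$ to both sides yields
\[
\rho(f(z),f(w))=\phi\bigl(d(f(z),f(w))\bigr)\leq\phi\bigl(d(z,w)\bigr)=\rho(z,w),
\]
which is the first assertion.

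For the equivalence, the ``if'' direction is already contained in the observation, made just after \eqref{eq:1-1}, that $\rho$ is invariant under the automorphism group of $\D$. For the ``only if'' direction, suppose $\rho(f(z),f(w))=\rho(z,w)$ for all $z,w\in\D$; such an $f$ is necessarily non-constant. Since $\phi$ is injective on $[0,1]$, the hypothesis forces $d(f(z),f(w))=d(z,w)$ for all $z,w$, so equality holds in Schwarz--Pick at every pair of distinct points, and the equality case of the Schwarz--Pick lemma then shows that $f$ is an automorphism of $\D$.

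Once \eqref{eq:1-1} is in hand the argument is essentially routine, and there is no serious obstacle; the only points calling for care are the \emph{strict} monotonicity of $\phi$ at the left endpoint $t=0$ (needed so that injectivity, rather than mere monotonicity, is available in the equality case) and the bookkeeping for the constant functions, which is what guarantees that $d$ is never evaluated at the boundary value $1$.
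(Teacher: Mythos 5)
Your proposal is correct and follows essentially the same route as the paper: the paper's factorization $\rho(z,w)^2-\rho(f(z),f(w))^2=(X^2-X_f^2)(2-X^2-X_f^2)$ is exactly your verification that $t\mapsto 2t^2-t^4$ is (strictly) increasing on $[0,1]$, after which both arguments reduce to the Schwarz--Pick inequality and its equality case. Your treatment of the isometry statement is in fact slightly more explicit than the paper's, which leaves the injectivity of $\phi$ implicit.
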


\begin{proof}
We set 
\[
X=\left|\dfrac{z-w}{1-\overline{w}z}\right|\quad \mbox{and}\quad X_f=\left|\dfrac{f(z)-f(w)}{1-\overline{f(w)}f(z)}\right|.
\]
Then, by (\ref{eq:1-1}), we have
\begin{align*}
\rho(z,w)^2-\rho(f(z),f(w))^2
&=(2X^2-X^4)-(2X_f^2-X_f^4)\\
&=(X^2-X_f^2)(2-X^2-X_f^2).
\end{align*}
Since the Schwarz--Pick inequality means that $X_f\leq X$, 
we conclude the proof. 
\end{proof}

\begin{prop}\label{lem:1-2}
Let $\lam_1,\lam_2, \mu_1$ and $\mu_2$ be points in $\D$. 
Then, 
$M_{\Phi}(\lam_1,\lam_2;\mu_1,\mu_2)$ is positive semi-definite 
if and only if $\rho(\mu_1,\mu_2) \leq \rho (\lam_1,\lam_2)$. 
\end{prop}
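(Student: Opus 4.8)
The plan is to turn the positive semi-definiteness of $M_{\Phi}$ into an explicit scalar inequality, recognize that inequality as a comparison of pseudo-hyperbolic distances, and then pass to $\rho$ through formula~(\ref{eq:1-1}).

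First I would use the identity $1-\la \Phi(\lambda),\Phi(\mu)\ra_{\cK}=(1-\lambda\overline{\mu})^2$ recorded in the introduction to rewrite
\[
M_{\Phi}(\lambda_1,\lambda_2;\mu_1,\mu_2)=
\begin{pmatrix}
\dfrac{(1-|\mu_1|^2)^2}{(1-|\lambda_1|^2)^2} & \dfrac{(1-\mu_1\overline{\mu_2})^2}{(1-\lambda_1\overline{\lambda_2})^2}\\[0.5cm]
\dfrac{(1-\mu_2\overline{\mu_1})^2}{(1-\lambda_2\overline{\lambda_1})^2} & \dfrac{(1-|\mu_2|^2)^2}{(1-|\lambda_2|^2)^2}
\end{pmatrix}.
\]
This is a Hermitian matrix whose diagonal entries are strictly positive, so it is positive semi-definite if and only if its determinant is non-negative, that is,
\[
\dfrac{(1-|\mu_1|^2)^2(1-|\mu_2|^2)^2}{(1-|\lambda_1|^2)^2(1-|\lambda_2|^2)^2}
\geq
\dfrac{|1-\mu_1\overline{\mu_2}|^4}{|1-\lambda_1\overline{\lambda_2}|^4}.
\]
Every term here is a ratio of positive reals, so I may take square roots and rearrange to get the equivalent inequality
\[
\dfrac{(1-|\mu_1|^2)(1-|\mu_2|^2)}{|1-\mu_1\overline{\mu_2}|^2}
\geq
\dfrac{(1-|\lambda_1|^2)(1-|\lambda_2|^2)}{|1-\lambda_1\overline{\lambda_2}|^2}.
\]

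Next I would invoke the elementary identity $|1-\overline{\mu}\lambda|^2-|\lambda-\mu|^2=(1-|\lambda|^2)(1-|\mu|^2)$, which yields
\[
1-\left|\dfrac{\lambda-\mu}{1-\overline{\mu}\lambda}\right|^2=\dfrac{(1-|\lambda|^2)(1-|\mu|^2)}{|1-\overline{\mu}\lambda|^2}\quad(\lambda,\mu\in\D).
\]
Writing $d(\lambda,\mu)=\left|(\lambda-\mu)/(1-\overline{\mu}\lambda)\right|$ for the pseudo-hyperbolic distance, the previous inequality becomes $1-d(\mu_1,\mu_2)^2\geq 1-d(\lambda_1,\lambda_2)^2$, i.e. $d(\mu_1,\mu_2)\leq d(\lambda_1,\lambda_2)$. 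Finally, formula~(\ref{eq:1-1}) reads $\rho(\lambda,\mu)^2=2d(\lambda,\mu)^2-d(\lambda,\mu)^4=g\bigl(d(\lambda,\mu)^2\bigr)$ with $g(t)=2t-t^2$; since $g$ is strictly increasing on $[0,1]$ and the squared pseudo-hyperbolic distances lie in $[0,1]$, the inequality $d(\mu_1,\mu_2)\leq d(\lambda_1,\lambda_2)$ is equivalent to $\rho(\mu_1,\mu_2)\leq\rho(\lambda_1,\lambda_2)$. Concatenating these equivalences proves the proposition.

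The argument is a chain of reformulations, so there is no serious obstacle; the only points demanding a little care are that extracting square roots is legitimate precisely because every quantity involved is a positive real, and that the monotonicity of $g(t)=2t-t^2$ is invoked only on $[0,1]$ — which is exactly the interval where the arguments $d(\cdot,\cdot)^2$ live — since $g$ is not monotone on all of $\R$.
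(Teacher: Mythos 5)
Your proposal is correct and follows essentially the same route as the paper: both reduce positive semi-definiteness to nonnegativity of the determinant (using the positive diagonal) and then identify the resulting scalar inequality with $\rho(\mu_1,\mu_2)\leq\rho(\lambda_1,\lambda_2)$. The only cosmetic difference is that the paper uses the identity $(1-\la\Phi(\lambda),\Phi(\lambda)\ra_{\cK})(1-\la\Phi(\mu),\Phi(\mu)\ra_{\cK})/|1-\la\Phi(\lambda),\Phi(\mu)\ra_{\cK}|^2=1-\rho(\lambda,\mu)^2$ to write $\det M_{\Phi}$ directly as a positive multiple of $\rho(\lambda_1,\lambda_2)^2-\rho(\mu_1,\mu_2)^2$, whereas you pass through the pseudo-hyperbolic distance and the monotonicity of $g(t)=2t-t^2$ on $[0,1]$; both are valid.
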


\begin{proof}
We set $M_{\Phi}=M_{\Phi}(\lam_1,\lam_2;\mu_1,\mu_2)$. 
Since the diagonal entries of $M_{\Phi}$ are nonnegative,  
it suffices to show that 
$\det M_{\Phi}\geq 0$ is equivalent to $\rho(\mu_1,\mu_2) \leq \rho (\lam_1,\lam_2)$. 
First, by definition, we have the following identity:
\begin{equation}\label{eq:1-2}
\dfrac{(1-\la \Phi(\lam),\Phi(\lam) \ra_{\cK})(1-\la \Phi(\mu),\Phi(\mu) \ra_{\cK})}
{|1-\la \Phi(\lam),\Phi(\mu) \ra_{\cK}|^2}
=1-\rho(\lam,\mu)^2
\end{equation}
for any $\lam$ and $\mu$ in $\D$.
Then, we have 
\begin{align*}
\det M_{\Phi}
&=\dfrac{(1-\la \Phi(\mu_1), \Phi(\mu_1)\ra_{\cK})(1-\la \Phi(\mu_2), \Phi(\mu_2)\ra_{\cK})}
{(1-\la \Phi(\lam_1),\Phi(\lam_1) \ra_{\cK})(1-\la \Phi(\lam_2),\Phi(\lam_2)\ra_{\cK})} 
-\left| \dfrac{1-\la \Phi(\mu_1), \Phi(\mu_2) \ra_{\cK}}{1-\la \Phi(\lam_1),\Phi(\lam_2) \ra_{\cK}} \right|^2\\[0.3cm]
&=
\dfrac{\left|1-\la \Phi(\mu_1), \Phi(\mu_2)\ra_{\cK} \right|^2\left(
\rho(\lam_1,\lam_2)^2-\rho(\mu_1,\mu_2)^2
\right)}
{(1-\la \Phi(\lam_1),\Phi(\lam_1) \ra_{\cK})(1-\la \Phi(\lam_2),\Phi(\lam_2)\ra_{\cK})}.
\end{align*}
This concludes the proof. 
\end{proof}

The following class denoted as $\cS_B$ is suggested by Proposition \ref{lem:1-2}:
\begin{equation*}\label{eq:2-2} 
\cS_B=
\left\{f\in \operatorname{Hol}(\D):
\dfrac{1-\la \Phi(f(z)), \Phi(f(\lam))\ra_{\cK}}{1-\la \Phi(z), \Phi(\lam)\ra_{\cK}}
\ \mbox{is positive semi-definite in $\D^2$}
\right\}.
\end{equation*}
A characterization of $\cSB$ is given in the following theorem.  
\begin{thm}\label{theorem:2-1}
$\cSB=\cS \cup \cS^{-1}$, where we set 
\[
\cS^{-1}= \left\{f\in\operatorname{Hol}(\D) : \inf_{\lam \in \D} |f(\lam)|\geq 1\right\}.
\]
\end{thm}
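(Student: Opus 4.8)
The plan is to prove the two inclusions $\cS \cup \cS^{-1} \subseteq \cSB$ and $\cSB \subseteq \cS \cup \cS^{-1}$ separately, using the identity \eqref{eq:1-2} and Proposition \ref{lem:1-2} as the main bridge. The key reformulation is that, by definition of $\rho$ and by \eqref{eq:1-2}, the kernel $(1-\la \Phi(f(z)),\Phi(f(\lam))\ra_{\cK})/(1-\la \Phi(z),\Phi(\lam)\ra_{\cK})$ being positive semi-definite on $\D^2$ is — at least on the diagonal and for $2\times 2$ compressions — equivalent to the Schwarz--Pick type inequality $\rho(f(z),f(\lam)) \le \rho(z,\lam)$ holding for all $z,\lam \in \D$. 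So $\cSB$ should coincide with the class of holomorphic self-maps of $\D$, or of the exterior $\{|w|\ge 1\}$, that contract $\rho$.

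For the inclusion $\cS \cup \cS^{-1} \subseteq \cSB$: if $f \in \cS$ is not a unimodular constant, Proposition \ref{prop:1-1} gives $\rho(f(z),f(\lam)) \le \rho(z,\lam)$ directly; unimodular constants are handled trivially since then the numerator kernel is identically zero. If $f \in \cS^{-1}$, then $1/f \in \cS$ (it is holomorphic and bounded by $1$ since $|f|\ge 1$), and one checks that $\rho$ is insensitive to the substitution $w \mapsto 1/w$ on the relevant domain — indeed the Möbius-type quantity $|(w-w')/(1-\overline{w'}w)|$ appearing in \eqref{eq:1-1} transforms in a controlled way under $w\mapsto 1/w$ — so the contraction property for $1/f$ transfers to $f$. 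I would then convert the pointwise inequality $\rho(f(z),f(\lam))\le \rho(z,\lam)$ back into positive semi-definiteness of the quotient kernel; this last step requires going from the $2\times 2$ determinant condition to full positive semi-definiteness of the $n\times n$ matrix, which is where one must be slightly careful.

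For the reverse inclusion $\cSB \subseteq \cS \cup \cS^{-1}$: suppose $f \in \cSB$. Positive semi-definiteness of the quotient kernel forces, for each pair $(z,\lam)$, the $2\times 2$ principal submatrix to be positive semi-definite, which by the determinant computation in the proof of Proposition \ref{lem:1-2} is exactly $\rho(f(z),f(\lam)) \le \rho(z,\lam)$. From the explicit formula \eqref{eq:1-1}, $\rho(\lam,\mu) = g\big(|(\lam-\mu)/(1-\overline\mu\lam)|\big)$ where $g(X)=\sqrt{2X^2-X^4}$ is strictly increasing on $[0,1]$, so the $\rho$-contraction is equivalent to the pseudo-hyperbolic contraction $|(f(z)-f(\lam))/(1-\overline{f(\lam)}f(z))| \le |(z-\lam)/(1-\overline\lam z)|$ — provided $f$ maps into $\D$. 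The real issue is that $f\in\cSB$ is a priori only holomorphic, so one must first show $f(\D)\subseteq \D$ or $f(\D)\subseteq\{|w|>1\}$ (or that $f$ is a unimodular constant). I expect this dichotomy to be the main obstacle: the plan is to examine the diagonal-adjacent behavior of the quotient kernel — positivity of the $(z,z)$ entry forces $(1-|\Phi(f(z))|^2_{\cK})$ and $(1-|\Phi(z)|^2_{\cK})$ to have the same sign, and since $1-\la\Phi(z),\Phi(z)\ra_{\cK} = (1-|z|^2)^2 > 0$ on $\D$, we get $2|f(z)|^2 - |f(z)|^4 < 1$, i.e. $(1-|f(z)|^2)^2 > 0$, so $|f(z)|\ne 1$ for all $z$. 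Connectedness of $\D$ and continuity of $f$ then force $f(\D)$ to lie entirely in $\D$ or entirely in $\{|w|>1\}$. In the first case the pseudo-hyperbolic contraction plus a normal-families/Schwarz--Pick argument (or just the classical fact that a holomorphic self-map of $\D$ contracting the pseudo-hyperbolic metric is automatic, so no new constraint — one instead uses that *any* holomorphic self-map lies in $\cS$) gives $f\in\cS$; in the second case, $1/f$ is a holomorphic self-map of $\D$, hence in $\cS$, so $f\in\cS^{-1}$. One should also double-check the boundary/degenerate cases where $1-\la\Phi(z),\Phi(\lam)\ra_{\cK}$ vanishes, but on $\D^2$ this quantity is $(1-\overline\lam z)^2$, which is nonvanishing, so no difficulty arises there.
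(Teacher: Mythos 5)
Your route through the invariant distance $\rho$ has a genuine gap at exactly the step you flag as needing care, and it cannot be repaired along the lines you propose. Membership in $\cSB$ requires the quotient kernel to be positive semi-definite on $\D^2$, i.e.\ \emph{every} $n\times n$ matrix $\bigl(\,(1-\overline{f(\lam_j)}f(\lam_i))^2/(1-\overline{\lam_j}\lam_i)^2\,\bigr)_{i,j}$ must be positive semi-definite. The inequality $\rho(f(z),f(\lam))\le\rho(z,\lam)$ is, via Proposition~\ref{lem:1-2}, equivalent only to positivity of all $2\times 2$ compressions, and $2\times 2$ positivity does not imply $n\times n$ positivity (the paper's closing example, where $P\odot P$ is positive semi-definite for a non-positive $3\times3$ Pick matrix $P$, illustrates how badly the two-point condition can fail to control the three-point one). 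So for the inclusion $\cS\cup\cS^{-1}\subseteq\cSB$ you need a different mechanism. The paper's argument is purely algebraic: for $f\in\cS$ the quotient kernel equals $\bigl((1-\overline{f(\lam)}f(z))/(1-\overline{\lam}z)\bigr)^2$, the Schur square of the classical de Branges--Rovnyak kernel, which is positive semi-definite by Schur's product theorem; for $f\in\cS^{-1}$ one writes it as the square of $\dfrac{1-\overline{(1/f(\lam))}(1/f(z))}{1-\overline{\lam}z}\,\overline{f(\lam)}f(z)$, again a product of positive semi-definite kernels. Your Proposition~\ref{prop:1-1} argument would only re-derive the $2\times2$ information, which is strictly weaker.

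There is a second, smaller error in your reverse inclusion. The diagonal entry of the quotient kernel is $(1-|f(z)|^2)^2/(1-|z|^2)^2$, which is \emph{always} nonnegative, including when $|f(z)|=1$ (where it is $0$). So positivity of the $(z,z)$ entry does not force $|f(z)|\ne 1$, contrary to what you assert. To exclude the circle you must use a $2\times2$ principal minor: if $|f(\lam)|=1$ the diagonal entry at $\lam$ vanishes, so positive semi-definiteness forces the entire row to vanish, i.e.\ $(1-f(\lam)\overline{f(\mu)})^2=0$ for all $\mu$, whence $f$ is the unimodular constant $1/\overline{f(\lam)}$ (which lies in $\cS$ anyway). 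This is exactly the determinant computation in the paper's proof. Once that is in place, your connectedness argument giving the dichotomy $f(\D)\subseteq\D$ or $f(\D)\subseteq\{|w|>1\}$ is fine and matches the paper.
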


\begin{proof}
First, we assume that $f$ belongs to $\cSB$.
If $|f(\lam)|=1$ for some $\lam$ in $\D$, 
then we have
\[\det
\begin{pmatrix}
\dfrac{1-\la \Phi(f(\lam)), \Phi(f(\lam))\ra_{\cK}}{1-\la \Phi(\lam), \Phi(\lam)\ra_{\cK}} 
& \dfrac{1-\la \Phi(f(\lam)), \Phi(f(\mu))\ra_{\cK}}{1-\la \Phi(\lam), \Phi(\mu)\ra_{\cK}}\\[0.5cm]
\dfrac{1-\la \Phi(f(\mu)), \Phi(f(\lam))\ra_{\cK}}{1-\la \Phi(\mu), \Phi(\lam)\ra_{\cK}} &
\dfrac{1-\la \Phi(f(\mu)), \Phi(f(\mu))\ra_{\cK}}{1-\la \Phi(\mu), \Phi(\mu)\ra_{\cK}}
\end{pmatrix}
=-\left|\dfrac{(1-f(\lam)\overline{f(\mu)})^2}{(1-\lam\overline{\mu})^2}\right|^2\geq 0
\]
for any $\mu$ in $\D$. 
This implies that $f(\D)$ does not intersect with the unit circle whenever $f$ is not a unimodular constant. 
Hence $f$ is in $\cS \cup \cS^{-1}$.
Next, we suppose that $f$ is a function in $\cS \cup \cS^{-1}$. 
If $f$ is in $\cS$, then, it follows from Schur's theorem that 
\[
\dfrac{1-\la \Phi(f(z)), \Phi(f(\lam))\ra_{\cK}}{1-\la \Phi(z), \Phi(\lam)\ra_{\cK}}
=\left(\dfrac{1-\overline{f(\lam)}f(z)}{1-\overline{\lam}z}\right)^2
\]
is positive semi-definite. Hence, $f$ belongs to $\cSB$. 
If $f$ is in $\cS^{-1}$, that is, $1/f$ is in $\cS$, then we have
\[
\dfrac{1-\la \Phi(f(z)), \Phi(f(\lam))\ra_{\cK}}{1-\la \Phi(z), \Phi(\lam)\ra_{\cK}}
=\left(\dfrac{1-\overline{(1/f(\lam))}(1/f(z))}{1-\overline{\lam}z}
\overline{f(\lam)}f(z)\right)^2
\]
is positive semi-definite by Schur's theorem. 
Hence, $f$ belongs to $\cSB$. 
\end{proof}

\section{Birational maps}

In this section, we shall consider the unit ball of $\cK=(\C^2,\la \cdot,\cdot\ra_{\cK})$.  
In what follows, each element in $\cK$ is denoted such as $\bz$ in bold type. 
Let $\Omega$ be the unit ball in $\cK$, that is, we set
\begin{align*}
\Omega
&=\{\bz\in \cK: \la\bz,\bz\ra_{\cK}<1\}\\
&=\{\bz=(z_1,z_2)^{\top}\in \C^2: |z_1|^2-|z_2|^2<1\}.
\end{align*}
A vector $\ba$ in $\cK$ is said to be neutral if $\la \ba ,\ba\ra_{\cK}=0$. 
Let $\ba$ be a nonneutral vector in $\Omega$. 
Then, for any $\bz$ in $\Omega$, we set
\[
P_{\ba}\bz=\frac{\la \bz,\ba \ra_{\cK}}{\la \ba,\ba\ra_{\cK}}\ba.
\]
It is not difficult to see that $P_{\ba}$ is idempotent and self-adjoint with respect to the inner product of $\cK$. 
Moreover, we set $s_{\ba}=(1-\la \ba,\ba \ra_{\cK})^{1/2}$, $Q_{\ba}=I-P_{\ba}$ and 
\[
\varphi_{\ba}(\bz)=\frac{\ba-P_{\ba}\bz-s_{\ba}Q_{\ba}\bz}{1-\la \bz,\ba \ra_{\cK}}
\]
(cf. Section 2.2 in Rudin~\cite{Rudin}, and see also K.D.--Kojin--Seto~\cite{KDKS}). 
Although $\0=(0,0)^{\top}$ is a neutral vector, as an exception, 
it is convenient to define $P_{\0}\bz=\0$ and $\varphi_{\0}(\bz)=-\bz$. 
We set
\[
\Omega_{\ba}=\Omega \setminus \{\bz\in\Omega:\la \bz,\ba\ra_{\cK}=1\}. 
\] 
A matrix $T$ is said to be $\sharp$-unitary if $T^{\sharp}=T^{-1}$ where $T^{\sharp}$ 
denotes the adjoint matrix of $T$ with respect to the inner product of $\cK$. 

\begin{prop}\label{prop:1-3}
Let $\ba$ be a nonneutral vector in $\Omega$. 
Then, 
\begin{enumerate}
\item 
$\varphi_{\ba}(\0)=\ba$ and $\varphi_{\ba}(\ba)=\0$.
\item 
For any $\bz$ and $\bbw$ in $\Omega_{\ba}$, 
\[
1-\la \varphi_{\ba}(\bz),\varphi_{\ba}(\bbw) \ra_{\cK}=\frac{1-\la \ba, \ba\ra_{\cK}}
{(1-\la \bz,\ba \ra_{\cK})(1-\la \ba,\bbw \ra_{\cK})}(1-\la \bz,\bbw \ra_{\cK}).
\]
In particular, for any $\bz$ in $\Omega_{\ba}$, 
\[
1-\la \varphi_{\ba}(\bz),\varphi_{\ba}(\bz) \ra_{\cK}=\frac{1-\la \ba, \ba\ra_{\cK}}
{|1-\la \bz,\ba \ra_{\cK}|^2}(1-\la \bz,\bz \ra_{\cK}).
\]
\item For any $\bz$ and $\bbw$ in $\Omega_{\ba}$, 
\[
1-\la \varphi_{\ba}(\bz),\bbw \ra_{\cK}=\frac{1-\la \ba,\bbw\ra_{\cK}}{1-\la \bz,\ba \ra_{\cK}}(1-\la \bz, \varphi_{\ba}(\bbw) \ra_{\cK}).
\]
\item For any $\bz$ in $\Omega_{\ba}$, 
$\varphi_{\ba}(\bz)$ belongs to $\Omega_{\ba}$
and $\varphi_{\ba}(\varphi_{\ba}(\bz))=\bz$. In particular, $\varphi_{\ba}|_{\Omega_{\ba}}$ is an automorphism of $\Omega_{\ba}$. 
\item For any $\sharp$-unitary matrix $T$, 
$T\varphi_{\ba}T^{\sharp}=\varphi_{T\ba}$. 
\item For any $z$ and $w$ in $\D$, 
\[
\rho(z,w)^2=\la \varphi_{\Phi(w)}(\Phi(z)), \varphi_{\Phi(w)}(\Phi(z)) \ra_{\cK}. 
\]
\end{enumerate}
\end{prop}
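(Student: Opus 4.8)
The proposition is a chain of mostly computational identities, and I would take them in the stated order. Part (i) is immediate substitution: at $\bz=\0$ one has $P_{\ba}\0=Q_{\ba}\0=\0$ and $1-\la\0,\ba\ra_{\cK}=1$, so $\varphi_{\ba}(\0)=\ba$; at $\bz=\ba$ one has $P_{\ba}\ba=\ba$, hence $Q_{\ba}\ba=\0$, so the numerator of $\varphi_{\ba}(\ba)$ vanishes while $1-\la\ba,\ba\ra_{\cK}\ne0$ because $\ba$ is nonneutral, giving $\varphi_{\ba}(\ba)=\0$.

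The core is (ii). Writing $N_{\bz}=\ba-P_{\ba}\bz-s_{\ba}Q_{\ba}\bz$ for the numerator of $\varphi_{\ba}(\bz)$, I would expand $\la N_{\bz},N_{\bbw}\ra_{\cK}$ using three structural facts: $P_{\ba}$ and $Q_{\ba}$ are complementary self-adjoint idempotents, so the cross terms $\la P_{\ba}\bz,Q_{\ba}\bbw\ra_{\cK}$ and $\la Q_{\ba}\bz,P_{\ba}\bbw\ra_{\cK}$ drop out; $P_{\ba}\ba=\ba$ and $Q_{\ba}\ba=\0$, which collapses the $\ba$-terms; and $s_{\ba}^2=1-\la\ba,\ba\ra_{\cK}$. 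One is left with
\[
\la N_{\bz},N_{\bbw}\ra_{\cK}=\la\ba,\ba\ra_{\cK}-\la\ba,\bbw\ra_{\cK}-\la\bz,\ba\ra_{\cK}+\la\bz,\ba\ra_{\cK}\la\ba,\bbw\ra_{\cK}+s_{\ba}^2\la\bz,\bbw\ra_{\cK},
\]
and subtracting this from the denominator $(1-\la\bz,\ba\ra_{\cK})(1-\la\ba,\bbw\ra_{\cK})$ leaves exactly $s_{\ba}^2(1-\la\bz,\bbw\ra_{\cK})$. Dividing by the denominator gives the displayed identity, and $\bbw=\bz$ gives the ``in particular'' case. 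This is the step I expect to be the main obstacle: not that it is deep, but it is the Kre\u{\i}n-space counterpart of the classical automorphism identity for the Euclidean ball (Rudin), and the bookkeeping with the indefinite inner product and the square root $s_{\ba}$ must be carried out cleanly. The neutral case $\ba=\0$ must be checked separately, but there $\varphi_{\0}=-\mathrm{id}$ makes all the identities in (ii) and (iii) trivial.

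For (iii) I would run the same bilinear expansion, now computing $\la\varphi_{\ba}(\bz),\bbw\ra_{\cK}$ and $\la\bz,\varphi_{\ba}(\bbw)\ra_{\cK}$ separately (here the $Q_{\ba}$-terms are only linear in $s_{\ba}$, so no further collapse occurs) and checking that $1-\la\varphi_{\ba}(\bz),\bbw\ra_{\cK}$ and $\tfrac{1-\la\ba,\bbw\ra_{\cK}}{1-\la\bz,\ba\ra_{\cK}}\bigl(1-\la\bz,\varphi_{\ba}(\bbw)\ra_{\cK}\bigr)$ have the same numerator over the common denominator $1-\la\bz,\ba\ra_{\cK}$. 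Part (iv) then follows from (ii) and (iii): the ``in particular'' identity gives $1-\la\varphi_{\ba}(\bz),\varphi_{\ba}(\bz)\ra_{\cK}>0$, since $1-\la\ba,\ba\ra_{\cK}>0$ and $1-\la\bz,\bz\ra_{\cK}>0$ (both $\ba,\bz\in\Omega$) and $|1-\la\bz,\ba\ra_{\cK}|^2>0$ (since $\bz\in\Omega_{\ba}$), so $\varphi_{\ba}(\bz)\in\Omega$; and (iii) with $\bbw=\ba$, using $\varphi_{\ba}(\ba)=\0$ from (i), yields $1-\la\varphi_{\ba}(\bz),\ba\ra_{\cK}=\tfrac{1-\la\ba,\ba\ra_{\cK}}{1-\la\bz,\ba\ra_{\cK}}\ne0$, so $\varphi_{\ba}(\bz)\in\Omega_{\ba}$. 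For the involution $\varphi_{\ba}\circ\varphi_{\ba}=\mathrm{id}$ I would set $\bbv=\varphi_{\ba}(\bz)$, compute its $P_{\ba}$- and $Q_{\ba}$-components (a multiple of $\ba$, respectively a multiple of $Q_{\ba}\bz$), substitute into $\varphi_{\ba}(\bbv)$, and watch the factor $s_{\ba}^2/(1-\la\bz,\ba\ra_{\cK})$ cancel between numerator and denominator, leaving $\bz$; being its own inverse, $\varphi_{\ba}|_{\Omega_{\ba}}$ is then a bijection, hence an automorphism, of $\Omega_{\ba}$.

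Part (v) is formal: $T^{\sharp}=T^{-1}$ means $T$ preserves $\la\cdot,\cdot\ra_{\cK}$, so $\la T\ba,T\ba\ra_{\cK}=\la\ba,\ba\ra_{\cK}$ (hence $s_{T\ba}=s_{\ba}$) and a one-line computation gives $P_{T\ba}=TP_{\ba}T^{\sharp}$, whence $Q_{T\ba}=TQ_{\ba}T^{\sharp}$; inserting these into the definition of $\varphi_{T\ba}$ and using $\la\bz,T\ba\ra_{\cK}=\la T^{\sharp}\bz,\ba\ra_{\cK}$ factors a $T$ out of the numerator and produces $\varphi_{T\ba}(\bz)=T\varphi_{\ba}(T^{\sharp}\bz)$. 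Finally (vi) combines the ``in particular'' part of (ii), applied with $\ba=\Phi(w)$ and $\bz=\Phi(z)$, with identity (\ref{eq:1-2}): note $\Phi(\lam)\in\Omega$ always since $1-\la\Phi(\lam),\Phi(\lam)\ra_{\cK}=(1-|\lam|^2)^2>0$, and $\Phi(z)\in\Omega_{\Phi(w)}$ since $1-\la\Phi(z),\Phi(w)\ra_{\cK}=(1-\overline{w}z)^2\ne0$; then (ii) shows $1-\la\varphi_{\Phi(w)}(\Phi(z)),\varphi_{\Phi(w)}(\Phi(z))\ra_{\cK}$ equals the left-hand side of (\ref{eq:1-2}) with $\lam=z$, $\mu=w$, which by (\ref{eq:1-2}) is $1-\rho(z,w)^2$. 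The degenerate case $w=0$ uses $\varphi_{\0}=-\mathrm{id}$, where $\la-\Phi(z),-\Phi(z)\ra_{\cK}=2|z|^2-|z|^4=\rho(z,0)^2$ by (\ref{eq:1-1}).
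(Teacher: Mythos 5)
Your proposal is correct and fills in exactly the ``routine calculation'' that the paper leaves implicit: the same bilinear expansion of the numerator for (ii) and (iii), the same use of (iii) at $\bbw=\ba$ (equivalently, the paper's direct formula for $\la\varphi_{\ba}(\bz),\ba\ra_{\cK}$) to get (iv), and the same combination of (ii) with identity (\ref{eq:1-2}) for (vi), with the added care of treating the neutral case $\Phi(0)=\0$ separately. One trivial quibble: in (i) the reason $1-\la\ba,\ba\ra_{\cK}\neq 0$ is that $\ba\in\Omega$ forces $\la\ba,\ba\ra_{\cK}<1$, not that $\ba$ is nonneutral (nonneutrality is what makes $P_{\ba}$ well defined).
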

\begin{proof}\rm 
Routine calculation shows (i), (ii) and (iii). 
We shall show (iv). 
It follows from (ii) that $\varphi_{\ba}$ maps $\Omega_{\ba}$ to $\Omega$. Moreover, 
Since
\[
\la \varphi_{\ba}(\bz),\ba\ra_{\cK}
=\frac{\la \ba,\ba\ra_{\cK}-\la \bz,\ba\ra_{\cK}}{1-\la \bz,\ba \ra_{\cK}}, 
\]
we have that $\la \varphi_{\ba}(\bz),\ba\ra_{\cK}\neq 1$. 
Hence $\varphi_{\ba}|_{\Omega_{\ba}}$ is a self-map of $\Omega_{\ba}$. 
Further, direct calculation shows that $\varphi_{\ba}(\varphi_{\ba}(\bz))=\bz$.   
Thus, we have (iv). 
It is not difficult to see (v). 
By (\ref{eq:1-2}) and (ii), we have (vi).  
\end{proof}

The following fact might be known. However, we have not found appropriate literature, 
so that we include a proof here for the sake of readers.  

\begin{thm}\label{thm:2-2}
Let $\bb$ be any nonneutral vector in $\Omega_{\ba}$. Then, 
there exists a $\sharp$-unitary matrix $T$ such that 
$\varphi_{\bb}\circ \varphi_{\ba}=T \varphi_{\varphi_{\ba}(\bb)}$. 
\end{thm}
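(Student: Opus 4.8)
The plan is to imitate the classical proof for the Möbius group of the disk: the composition $\varphi_{\bb}\circ\varphi_{\ba}$ is a birational automorphism of $\Omega_{\ba}$ (being a composition of two such maps, by Proposition \ref{prop:1-3}(iv)) that sends $\ba$ to $\varphi_{\bb}(\0)=\bb$ and $\0$ to $\varphi_{\bb}(\ba)$. On the other hand, $\varphi_{\varphi_{\ba}(\bb)}$ sends $\0$ to $\varphi_{\ba}(\bb)$. The idea is that $\varphi_{\bb}\circ\varphi_{\ba}$ and $\varphi_{\varphi_{\ba}(\bb)}$ agree ``up to a $\sharp$-unitary twist''; to pin down the twist, I would compose with a further elementary automorphism so that the combined map fixes $\0$, and then show that any birational automorphism of $\Omega_{\ba}$ fixing $\0$ must be linear and $\sharp$-unitary.

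Concretely, set $\bc=\varphi_{\ba}(\bb)$, which is a nonneutral vector in $\Omega_{\ba}$ by Proposition \ref{prop:1-3}(iv) (and nonneutrality of $\bc$ follows from $1-\la\bc,\bc\ra_{\cK}=(1-\la\ba,\ba\ra_{\cK})(1-\la\bb,\bb\ra_{\cK})/|1-\la\bb,\ba\ra_{\cK}|^{2}\neq 1$, using Proposition \ref{prop:1-3}(ii)). Consider
\[
\Psi=\varphi_{\bc}\circ\varphi_{\bb}\circ\varphi_{\ba}.
\]
By Proposition \ref{prop:1-3}(i),(iv) this is a birational automorphism of $\Omega_{\ba}$ with $\Psi(\ba)=\varphi_{\bc}(\bb)$ and, more importantly, $\Psi(\varphi_{\ba}(\bb))=\varphi_{\bc}(\varphi_{\bb}(\bb))=\varphi_{\bc}(\0)=\bc=\varphi_{\ba}(\bb)$; so $\Psi$ fixes the point $\varphi_{\ba}(\bb)$. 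To exploit this I would instead build a map fixing $\0$: note that $\varphi_{\bb}\circ\varphi_{\ba}$ carries $\0$ to $\varphi_{\bb}(\ba)$, so it is natural to compose on the left with $\varphi_{\varphi_{\bb}(\ba)}$. Alternatively — and this is the cleaner route — I would use Proposition \ref{prop:1-3}(iii) to compute $1-\la(\varphi_{\bb}\circ\varphi_{\ba})(\bz),(\varphi_{\bb}\circ\varphi_{\ba})(\bbw)\ra_{\cK}$ and $1-\la\varphi_{\bc}(\bz),\varphi_{\bc}(\bbw)\ra_{\cK}$ in closed form via Proposition \ref{prop:1-3}(ii), and observe that the map $T:=(\varphi_{\bb}\circ\varphi_{\ba})\circ\varphi_{\bc}$ satisfies $1-\la T\bz,T\bbw\ra_{\cK}=1-\la\bz,\bbw\ra_{\cK}$ for all $\bz,\bbw$ in a nonempty open set, hence (by polarization and the identity theorem for rational functions) everywhere; this forces $T$ to be affine, and since $T(\0)=(\varphi_{\bb}\circ\varphi_{\ba})(\bc)=\varphi_{\bb}(\bb)=\0$ it is in fact linear and $\sharp$-unitary. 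Then $\varphi_{\bb}\circ\varphi_{\ba}=T\circ\varphi_{\bc}=T\varphi_{\varphi_{\ba}(\bb)}$, which is the claim.

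The routine but slightly lengthy part is the bookkeeping in the two applications of Proposition \ref{prop:1-3}(ii),(iii): one has to track the scalar denominators $1-\la\bz,\ba\ra_{\cK}$, $1-\la\bz,\bb\ra_{\cK}$, $1-\la\bz,\bc\ra_{\cK}$ and verify they cancel so that $1-\la T\bz,T\bbw\ra_{\cK}$ reduces to $1-\la\bz,\bbw\ra_{\cK}$. The one genuine subtlety — the step I expect to be the main obstacle — is the passage from ``$T$ preserves the Kre\u{\i}n form'' to ``$T$ is a linear $\sharp$-unitary matrix'': a priori $T$ is only a rational self-map of $\Omega_{\ba}$, so I must argue that a birational self-map of $\Omega_{\ba}$ fixing $\0$ and preserving $1-\la\cdot,\cdot\ra_{\cK}$ is linear. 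This can be handled by noting that preservation of the form implies $T$ is bounded near $\0$ hence has no pole through $\0$, that its differential at $\0$ is $\sharp$-unitary, and then that the $\sharp$-unitary group acts transitively enough that $S:=(D T_{\0})^{-1}\circ T$ is a birational self-map fixing $\0$ with $DS_{\0}=I$ and still preserving the form; a Cartan–Carathéodory type uniqueness argument (or a direct power-series comparison using that $S$ preserves $\la\bz,\bz\ra_{\cK}$ to all orders) then gives $S=\mathrm{id}$, whence $T=D T_{\0}$ is linear and $\sharp$-unitary.
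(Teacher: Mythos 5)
Your proposal is correct in outline and arrives at the same reduction as the paper---both arguments come down to showing that $T=\varphi_{\bb}\circ\varphi_{\ba}\circ\varphi_{\varphi_{\ba}(\bb)}$ fixes $\0$, preserves the form $1-\la\cdot,\cdot\ra_{\cK}$, and is linear---but you prove the two halves in the opposite order, and the linearity step is where you genuinely diverge. The paper gets linearity first by pure bookkeeping with the linear-fractional form of the maps: the denominator of $\varphi_{\bb}(\varphi_{\ba}(\bz))$ is $(1-\la\ba,\bb\ra_{\cK})(1-\la\bz,\varphi_{\ba}(\bb)\ra_{\cK})$, so after composing with $\varphi_{\varphi_{\ba}(\bb)}$ the denominator collapses to the constant $1-\la\varphi_{\ba}(\bb),\varphi_{\ba}(\bb)\ra_{\cK}$ and the composite is visibly affine with $\0\mapsto\0$; only then is form preservation checked, by essentially the same chain of identities from Proposition \ref{prop:1-3}(ii)--(iii) that you compute. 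You instead establish form preservation first and must then argue that a form-preserving rational map fixing $\0$ is linear. Your instinct that this is the delicate point is right ($\Omega$ is unbounded, so classical Cartan uniqueness does not apply), but the machinery you invoke (differential at $\0$, power-series induction) is heavier than needed: once $\la T\bz,T\bbw\ra_{\cK}=\la\bz,\bbw\ra_{\cK}$ holds on a nonempty open set, pick $\bbw_1,\bbw_2$ there with $T\bbw_1,T\bbw_2$ linearly independent (possible since $T$ is locally biholomorphic near $\0$); then $\bz\mapsto\la T\bz,T\bbw_j\ra_{\cK}$ is linear for $j=1,2$, and nondegeneracy of the form forces each coordinate of $T\bz$ to be linear. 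So your route closes and is arguably cleaner than the paper's denominator computation. Two caveats: your parenthetical claim that $\bc=\varphi_{\ba}(\bb)$ is automatically nonneutral does not follow from the displayed formula, since $(1-\la\ba,\ba\ra_{\cK})(1-\la\bb,\bb\ra_{\cK})/|1-\la\ba,\bb\ra_{\cK}|^{2}$ can equal $1$ (the paper silently makes the same assumption so that $\varphi_{\varphi_{\ba}(\bb)}$ is defined); and, as the paper does explicitly, you should record that all identities hold only off finitely many nowhere dense sets $\{\bz:\la\bz,\ba_i\ra_{\cK}=1\}$ and extend by density, which you only gesture at.
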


\begin{proof}
Since $\varphi_{\ba}$ is a linear fractional map, making some calculations,   
we obtain that the denominator of $\varphi_{\bb}(\varphi_{\ba}(\bz))$ is 
\[
(1-\la \ba,\bb\ra_{\cK})(1-\la \bz,\varphi_{\ba}(\bb)\ra_{\cK})
\]
and the numerator of $\varphi_{\bb}(\varphi_{\ba}(\bz))$ is a linear expression of $\bz$. 
By this observation, we have that 
\begin{align*}
\varphi_{\bb}\circ \varphi_{\ba}\circ \varphi_{\varphi_{\ba}(\bb)}(\bz)
&=\dfrac{\text{a linear expression of $\varphi_{\varphi_{\ba}(\bb)}(\bz)$}}{1-\la \varphi_{\varphi_{\ba}(\bb)}(\bz),\varphi_{\ba}(\bb) \ra_{\cK}}\\[0.3cm]
&=\dfrac{\dfrac{\text{a linear expression of $\bz$}}{1-\la \bz,\varphi_{\ba}(\bb) \ra}}
{1-\left\la \dfrac{\varphi_{\ba}(\bb)-P_{\varphi_{\ba}(\bb)}\bz-s_{\varphi_{\ba}(\bb)}Q_{\varphi_{\ba}(\bb)}\bz}{1-\la \bz,\varphi_{\ba}(\bb)\ra_{\cK}},\varphi_{\ba}(\bb) \right\ra_{\cK}}\\[0.3cm]
&=\dfrac{\text{a linear expression of $\bz$}}{(1-\la \bz,\varphi_{\ba}(\bb) \ra_{\cK})-(
\la \varphi_{\ba}(\bb),\varphi_{\ba}(\bb) \ra_{\cK}-\la \bz, \varphi_{\ba}(\bb) \ra_{\cK})}\\[0.3cm]
&=\dfrac{\text{a linear expression of $\bz$}}{1-\la \varphi_{\ba}(\bb),\varphi_{\ba}(\bb) \ra_{\cK}}.
\end{align*}
Hence, there exist a matrix $T$ and a vector $\bu$ such that 
\[
\varphi_{\bb}\circ \varphi_{\ba}\circ \varphi_{\varphi_{\ba}(\bb)}(\bz)=T\bz +\bu. 
\]
Further, 
it follows from (i) and (iv) of Proposition \ref{prop:1-3} that
\[
\varphi_{\bb}\circ \varphi_{\ba}\circ \varphi_{\varphi_{\ba}(\bb)}(\0)=\0,
\]
that is, $\bu=\0$. Moreover, by (iv) of Proposition \ref{prop:1-3}, we have that
\[
\varphi_{\bb}\circ \varphi_{\ba}\circ \varphi_{\varphi_{\ba}(\bb)}^{-1}(\bz)
=\varphi_{\bb}\circ \varphi_{\ba}\circ \varphi_{\varphi_{\ba}(\bb)}(\bz)=T\bz.
\]
Thus we have $\varphi_{\bb}\circ \varphi_{\ba}=T \varphi_{\varphi_{\ba}(\bb)}$.
Hence, applying (ii) and (iii) of Proposition \ref{prop:1-3} repeatedly, we have that 
\begin{align*}
1-\la T\bz,T\bz \ra_{\cK}
&=1- \la \varphi_{\bb}\circ \varphi_{\ba}\circ \varphi_{\varphi_{\ba}(\bb)}(\bz),
\varphi_{\bb}\circ \varphi_{\ba}\circ \varphi_{\varphi_{\ba}(\bb)}(\bz) \ra_{\cK}\\[0.3cm]
&=\frac{1-\la \bb,\bb\ra_{\cK}}{|1-\la \varphi_{\ba}\circ \varphi_{\varphi_{\ba}(\bb)}(\bz), \bb\ra_{\cK}|^2}
\cdot \frac{1-\la \ba,\ba\ra_{\cK}}{|1-\la \varphi_{\varphi_{\ba}(\bb)}(\bz),\ba\ra_{\cK}|^2}\\[0.3cm]
& \hspace{1cm}\times \frac{1-\la \varphi_{\ba}(\bb),\varphi_{\ba}(\bb)\ra_{\cK}}{|1-\la \bz, \varphi_{\ba}(\bb)\ra_{\cK}|^2}\cdot(1-\la \bz,\bz \ra_{\cK})\\[0.3cm]
&=\dfrac{(1-\la\bb,\bb\ra_{\cK})|1-\la\bz,\varphi_{\varphi_{\ba}(\bb)}(\ba)\ra_{\cK}|^2|1-\la\varphi_{\ba}(\bb),\ba\ra_{\cK}|^2}
{|1-\la\ba,\bb\ra_{\cK}|^2|1-\la \varphi_{\ba}(\bb),\varphi_{\ba}(\bb)\ra_{\cK}|^2} \\[0.3cm]
& \hspace{1cm}\times \dfrac{(1-\la\ba,\ba\ra_{\cK})|1-\la\bz,\varphi_{\ba}(\bb)\ra_{\cK}|^2}{|1-\la\varphi_{\ba}(\bb),\ba\ra_{\cK}|^2|1-\la\bz,\varphi_{\varphi_{\ba}(\bb)}(\ba)\ra_{\cK}|^2}\\[0.3cm]
& \hspace{2cm}\times \frac{1-\la \varphi_{\ba}(\bb),\varphi_{\ba}(\bb)\ra_{\cK}}{|1-\la \bz, \varphi_{\ba}(\bb)\ra_{\cK}|^2}\cdot(1-\la \bz,\bz \ra_{\cK})\\[0.3cm]
&=\dfrac{(1-\la\ba,\ba\ra_{\cK})(1-\la\bb,\bb\ra_{\cK})}{|1-\la\ba,\bb\ra_{\cK}|^2(1-\la \varphi_{\ba}(\bb), \varphi_{\ba}(\bb)\ra_{\cK})}\cdot(1-\la \bz,\bz \ra_{\cK})\\[0.3cm]
&=1-\la \bz,\bz \ra_{\cK}.
\end{align*}
The above calculations work if $\displaystyle \bz\in \bigcap_{i=1}^n\Omega_{\ba_i}$, where $\ba_1,\ldots, \ba_n$ are  suitable finite nonneutral vectors (e.g., $\ba_1=\ba, \ba_2=\varphi_{\ba}(\bb), \ba_3=\varphi_{\varphi_{\ba}(\bb)}(\ba)$).
Since each $\{\bz\in\Omega:\la\bz, \ba_i\ra_{\cK}=1\}$ is a nowhere dense set, so is $\displaystyle\bigcup_{i=1}^n\{\bz\in\Omega:\la\bz, \ba_i\ra_{\cK}=1\}$. Therefore, $\displaystyle \bigcap_{i=1}^n\Omega_{\ba_i}$ is dense in $\Omega.$
By the continuity of inner product and matrix of finite dimension, 
for any vector $\bz$ in $\cK$, we have that 
$\la T\bz,T\bz \ra_{\cK}=\la \bz,\bz \ra_{\cK}$. 
This concludes the proof. 
\end{proof}

\begin{rem}\rm
It is worth mentioning that 
our proof of Theorem \ref{thm:2-2} is applied not only to 
the Bergman kernel but also to other Hardy--Bergman type kernels 
(see also \cite{KDKS}). 
\end{rem}

\begin{rem}\rm 
If $\ba \neq \0$, then $\varphi_{\ba}$ is a rational map which is not defined everywhere on $\Omega$. 
Such maps play important roles in algebraic geometry. 
See Section 5 in Chapter 5 of Cox--Little--O'Shea~\cite{CLO}, 
in particular Definitions 4, 7 and 9. 
\end{rem}

\section{Indefinite kernel}

In this section, we study the structure of the indefinite kernel defined 
on $\Omega\times \Omega$
\[
K(\bz,\blam)=\dfrac{1}{1-\la \bz, \blam\ra_{\cK}}\quad (\bz,\blam \in \Omega).
\] 
The symbol $\B_2$ denotes the open unit ball
\[
\B_2=\left\{(z_1, z_2)^{\top}\in\C^2:|z_1|^2+|z_2|^2<1\right\}.
\]
Trivially, 
$\B_2$
is included in $\Omega$. Further, 
the following formal calculations make sense on 
$\B_2\times \B_2$:

\begin{align*}
K(\bz,\blam)
&=\sum_{n=0}^{\infty}\la \bz, \blam\ra_{\cK}^n\\
&=\sum_{n=0}^{\infty}(z_1\overline{\lam_1}-z_2\overline{\lam_2})^n\\
&=\sum_{n=0}^{\infty}\left\{\sum_{k=0}^n(-1)^k\binom{n}{k}(z_1\overline{\lam_1})^{n-k}(z_2\overline{\lam_2})^k\right\}\\
&=\sum_{n=0}^{\infty}\left\{\sum_{\ell=0}^{\lfloor n/2 \rfloor}\binom{n}{2\ell}(z_1\overline{\lam_1})^{n-2\ell}(z_2\overline{\lam_2})^{2\ell}\right\}
-\sum_{n=1}^{\infty}\left\{\sum_{\ell=1}^{\lceil n/2 \rceil}\binom{n}{2\ell-1}(z_1\overline{\lam_1})^{n-2\ell+1}(z_2\overline{\lam_2})^{2\ell-1}\right\}. 
\end{align*}
Indeed, if $|z_1|^2+|z_2|^2, |\lambda_1|^2+|\lambda_2|^2\le r<1$,
\begin{align*}
\sum_{n=0}^{\infty}(|z_1\overline{\lambda_1}|+|z_2\overline{\lambda_2}|)^n&\le\sum_{n=0}^{\infty}(|z_1|^2+|z_2|^2)^{n/2}(|\lambda_1|^2+|\lambda_2|^2)^{n/2}\le \sum_{n=0}^{\infty}r^n<\infty.
\end{align*}
Setting 
\[
K_+(\bz,\blam)=\sum_{n=0}^{\infty}\left\{\sum_{\ell=0}^{\lfloor n/2 \rfloor}\binom{n}{2\ell}(z_1\overline{\lam_1})^{n-2\ell}(z_2\overline{\lam_2})^{2\ell}\right\}
\]
and
\[
K_-(\bz,\blam)=\sum_{n=1}^{\infty}\left\{\sum_{\ell=1}^{\lceil n/2 \rceil}\binom{n}{2\ell-1}(z_1\overline{\lam_1})^{n-2\ell+1}(z_2\overline{\lam_2})^{2\ell-1}\right\},
\]
these $K_+$ and $K_-$ are positive semi-definite kernels defined on 
$\B_2\times \B_2$.
Let $\cH_+$ and $\cH_-$ be the reproducing kernel Hilbert spaces generated by $K_+$ and $K_-$, respectively. 
Further, we consider the linear map $\Delta$ defined as 
\[
\Delta: (F_+,F_-)
\to F_+-F_-\quad (F_+\in \cH_+, F_-\in \cH_-).
\]
Note that representation $F=F_+-F_-$ is unique, because 
any function in $\cH_+$ is a power series which has no odd terms of $z_2$, 
but every function in $\cH_-$ has odd terms of $z_2$. 
Next, 
we define an inner product on 
$\operatorname{ran} \Delta$, the range space of $\Delta$, as follows: 
\[
\la \Delta(F_+,F_-),\Delta(G_+,G_-) \ra_{\Delta}=\la F_+,G_+ \ra_{\cH_+}-\la F_-,G_- \ra_{\cH_-}
\]
for any $F_+,G_+$ in $\cH_+$ and any $F_-,G_-$ in $\cH_-$. 

Let $\cK_K(\B_2)$ denote the linear space $\ran \Delta$ endowed with the inner product $\la \cdot ,\cdot\ra_{\Delta}$. 
We see that the Bergman space $L_a^2(\D)$ is isometrically embedded into $\cK_K(\B_2)$ as a Kre\u{\i}n space in the following sense (cf. Theorems 7.31 and 8.2 in Agler-McCarthy~\cite{AM}). We fix a sufficiently small positive number $r$ so that $\Phi(\lambda)$ falls in $\B_2$ for any $\lambda\in r\D$. We note that $L_a^2(\D)|_{r\D}=\{f|_{r\D}:f\in L_a^2(\D)\}$ is a reproducing kernel Hilbert space on $r\D$ with kernel $k|_{r\D\times r\D}$. Then, 
\[
L_a^2(\D)\ominus I(r\D)\rightarrow L_a^2(\D)|_{r\D},\;\;\;\; f\mapsto f|_{r\D}
\]
is a unitary operator, where $I(r\D)=\{f\in L_a^2(\D):f|_{r\D}=0\}.$ Since $r\D$ is open and $L_a^2(\D)$ is a holomorphic Hilbert function space, we have $I(r\D)=0$. Thus,
\[
L_a^2(\D)\rightarrow L_a^2(\D)|_{r\D},\;\;\;\; f\mapsto f|_{r\D}
\]
is a unitary operator. Moreover, we can prove the following result:

\begin{thm}
$\cK_K(\B_2)$ is a reproducing kernel Kre\u{\i}n space on $\B_2$. 
Moreover, $L_a^2(\D)|_{r\D}$ is isometrically embedded into $\cK_K(\B_2)$. More precisely, there exists a linear operator $V:L_a^2(\D)|_{r\D}\rightarrow \cK_K(\B_2)$ such that
$(1)$ $Vk_{\lambda}=K_{\Phi(\lambda)}$ $(\lambda\in r\D)$ and $(2)$ $\la Vf, Vf\ra_{\Delta}=\la f, f\ra$ $(f\in L_a^2(\D)|_{r\D})$ hold.
\end{thm}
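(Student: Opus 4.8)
The plan has three steps: first identify $\cK_K(\B_2)$ with the Kre\u{\i}n-space direct sum $\cH_+\oplus(-\cH_-)$ and check directly that $K$ is its reproducing kernel; next define $V$ on the span of the Bergman kernel functions and verify it preserves the inner product there; finally extend $V$ to all of $L_a^2(\D)|_{r\D}$, which is the delicate step. For the first step I would use that the representation $F=F_+-F_-$ with $F_+\in\cH_+$, $F_-\in\cH_-$ is unique (as already observed in the excerpt), so $\Delta$ is a linear bijection of $\cH_+\oplus\cH_-$ onto $\ran\Delta$; transporting $\la\cdot,\cdot\ra_{\Delta}$ then shows that $\cK_K(\B_2)$ is isomorphic to the Kre\u{\i}n space $\cH_+\oplus(-\cH_-)$, in particular complete. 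To recover $K$ as the reproducing kernel I would fix $\blam\in\B_2$, note that $K_+(\cdot,\blam)$ carries only even powers of $z_2$ and $K_-(\cdot,\blam)$ only odd ones, hence $K_+(\cdot,\blam)\in\cH_+$, $K_-(\cdot,\blam)\in\cH_-$ and $K(\cdot,\blam)=K_+(\cdot,\blam)-K_-(\cdot,\blam)\in\ran\Delta$, and then compute, for $F=F_+-F_-$ in $\cK_K(\B_2)$,
\[
\la F,\,K(\cdot,\blam)\,\ra_{\Delta}=\la F_+,\,K_+(\cdot,\blam)\,\ra_{\cH_+}-\la F_-,\,K_-(\cdot,\blam)\,\ra_{\cH_-}=F_+(\blam)-F_-(\blam)=F(\blam).
\]

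For the second step, since $\Phi(r\D)\subset\B_2$ and $k(z,\lam)=K(\Phi(z),\Phi(\lam))$, the restriction of the Bergman kernel to $r\D$ equals $K(\Phi(z),\Phi(\lam))=K_+(\Phi(z),\Phi(\lam))-K_-(\Phi(z),\Phi(\lam))$, a difference of positive semi-definite kernels. I would set $Vk_{\lam}:=K_{\Phi(\lam)}$ on $\Span\{k_{\lam}:\lam\in r\D\}$ and use the reproducing identities just proved to get
\[
\la Vk_{\lam},\,Vk_{\mu}\,\ra_{\Delta}=\la K_{\Phi(\lam)},\,K_{\Phi(\mu)}\,\ra_{\Delta}=K(\Phi(\mu),\Phi(\lam))=k(\mu,\lam)=\la k_{\lam},\,k_{\mu}\,\ra,
\]
so that $V$ is well defined and inner-product preserving on that span; property (2) on the span then follows by polarization. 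To see what $V$ must do off the span, I would expand $K_{\Phi(\lam)}=\sum_{n\ge0}\overline{\lam}^{\,n}\chi_n$ with each $\chi_n$ a polynomial on $\B_2$ whose even part lies in $\cH_+$ and odd part in $\cH_-$; the natural prescription on the Bergman orthonormal basis $e_n(z)=\sqrt{n+1}\,z^n$ is $Ve_n=\chi_n/\sqrt{n+1}$, and the identity behind the isometry, $\la\chi_n,\chi_m\ra_{\Delta}=(n+1)\,\delta_{nm}$, I would obtain by grouping the power-series expansions of $K_+(\Phi(\mu),\Phi(\lam))$ and $K_-(\Phi(\mu),\Phi(\lam))$ by powers of $\lam$ and $\overline{\mu}$ and subtracting, using that their difference is $k|_{r\D\times r\D}$.

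The hard part will be extending $V$ to all of $L_a^2(\D)|_{r\D}$. Because $\cK_K(\B_2)$ carries an indefinite metric, an inner-product preserving linear map on a dense subspace of a Hilbert space does not in general extend by continuity, so one cannot simply pass to the closure; this is where the reproducing-kernel-Kre\u{\i}n-space results of Agler--McCarthy referenced above are needed. My plan there is to exploit the pullback decomposition from the previous step: the positive semi-definite kernel $k|_{r\D\times r\D}$ is dominated by $K_+(\Phi(\cdot),\Phi(\cdot))$, which governs the $\cH_+$-component of the extension, while the $\cH_-$-component is read off from $K_-(\Phi(\cdot),\Phi(\cdot))$; assembling these inside $\cK_K(\B_2)$ produces a candidate $Vf$ for each $f$, and (1) and (2) would follow by continuity of the two reproducing-kernel Hilbert space inner products along an approximation of $f$ by elements of $\Span\{k_{\lam}\}$. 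Verifying that this candidate really lands in $\cK_K(\B_2)$ — equivalently, that $\sum_n a_n\,\chi_n/\sqrt{n+1}$ is summable there whenever $\sum_n|a_n|^2<\infty$, despite the two pullback kernels growing much faster than the Bergman kernel — is the point I expect to be the crux.
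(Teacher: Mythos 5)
Your first two steps are correct and coincide with what the paper actually proves: uniqueness of the decomposition $F=F_+-F_-$ makes $\Delta$ injective, the computation $\la F,K_{\blam}\ra_{\Delta}=F(\blam)$ gives the reproducing property, and the Gram identity $\la K_{\Phi(\lam)},K_{\Phi(\mu)}\ra_{\Delta}=K(\Phi(\mu),\Phi(\lam))=k(\mu,\lam)$ gives (1) and (2) on $\Span\{k_{\lam}:\lam\in r\D\}$. The genuine gap is your third step, which you flag but do not close, and the route you sketch there in fact breaks down. Writing $K_{\Phi(\lam)}=\sum_{n\ge0}\overline{\lam}^{\,n}\chi_n$ with $\chi_n(\bz)=\sum_{k\le n/2}\binom{n-k}{k}(\sqrt2\,z_1)^{n-2k}(-z_2)^k$, one has indeed $\la\chi_n,\chi_m\ra_{\Delta}=(n+1)\delta_{nm}$ (the alternating sum $\sum_k(-1)^k\binom{n-k}{k}2^{n-2k}=n+1$), but the majorant (Hilbert) norm in $\cH_+\oplus\cH_-$ is $\sum_k\binom{n-k}{k}2^{n-2k}$, the $n$-th Taylor coefficient of $(1-2t-t^2)^{-1}$, i.e.\ a Pell number growing like $(1+\sqrt2)^n$. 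Since convergence in the Kre\u{\i}n space $\cK_K(\B_2)$ means convergence of both components, the series $\sum_n a_n\chi_n/\sqrt{n+1}$ diverges in $\cK_K(\B_2)$ for typical $\ell^2$ coefficients (e.g.\ $a_n=1/(n+1)$); more generally, a bounded extension of $k_{\lam}\mapsto K_{\Phi(\lam)}$ would force the pullback kernel $K_+(\Phi(z),\Phi(\lam))+K_-(\Phi(z),\Phi(\lam))=(1-2z\overline{\lam}-z^2\overline{\lam}^2)^{-1}$ to be dominated by a constant multiple of $k$ on $r\D$, which fails by the same coefficient growth ($(1+\sqrt2)^n$ versus $n+1$). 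So the summability you single out as the crux is actually false: any operator $V$ defined on all of $L_a^2(\D)|_{r\D}$ with (1) and (2) must be discontinuous and cannot be obtained by closing the densely defined isometry or by reading the $\cH_\pm$-components off the pullback kernels; one would have to exploit the nonuniqueness of representatives in $\ran\Delta$ (the large subspace of functions vanishing on $\Phi(r\D)$), and neither your sketch nor a continuity argument does this.

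For comparison, the paper's own proof consists of exactly your steps one and two and then disposes of the extension in a single sentence (``it is easy to see that the map $k_{\lam}\mapsto K_{\Phi(\lam)}$ extends to an isometric linear embedding''). So you have reproduced the substantive content of the published argument and correctly located the one step that is not routine; what is missing, both in your proposal and in that one-line assertion, is an actual construction of $V$ on all of $L_a^2(\D)|_{r\D}$ (or a weakening of the statement to a densely defined isometry on the span of the kernel functions), and the Pell-number growth above shows this cannot be a continuity argument.
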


\begin{proof}
By the definition of the inner product $\la \cdot ,\cdot \ra_{\Delta}$, 
$\cK_K(\B_2)=(\ran \Delta, \la \cdot ,\cdot\ra_{\Delta})$ is a Kre\u{\i}n space.  
Let $\blam$ be any point in $\B_2$. 
Setting $K_{\blam}(\bz)=K(\bz,\blam)$ and $K_{\pm, \blam}(\bz)=K_{\pm}(\bz,\blam)$, we have 
\[
\la F, K_{\blam} \ra_{\Delta}=\la F_+,K_{+,\blam} \ra_{\cH_+}-\la F_-,K_{-,\blam} \ra_{\cH_-}=F_+(\blam)-F_-(\blam)=F(\blam)
\]
for any $F=\Delta(F_+,F_-)$. 
Hence, $K_{\blam}$ is the reproducing kernel of $\cK_K(\B_2)$ at $\blam$. 
Since $K(\Phi(z),\Phi(w))=k(z,w)$,  
it is easy to see that the map $k_{\lam}\mapsto K_{\Phi(\lam)}$ $(\lambda\in r\D)$ 
extends to an isometric linear embedding of $L_a^2(\D)|_{r\D}$ into $\cK_K(\B_2)$.  
This concludes the proof.
\end{proof}

\begin{rem}\rm 
Our 
$\cK_K(\B_2)$
gives a concrete example to Theorem 3.1 in Alpay~\cite{Alpay}. 
\end{rem}

\section{Pick matrices}

In this section, we shall discuss structure of Pick matrices. 
It is well known that the Bergman kernel does not have the two-point Pick property
(see Example 5.17 in Agler--McCarthy~\cite{AM}). However, 
the following modified two-point interpolation theorem holds. 

\begin{lem}\label{thm:1-4}
Let $\lam_1$ and $\lam_2$ be distinct two points in $\D$, and let $\w_1$ and $\w_2$ be arbitrary two points in $\D$.  
Then, $M_{\Phi}(\lam_1,\lam_2;\w_1,\w_2)$ is positive semi-definite if and only if 
there exists a function $f$ in $\cS$ such that $f(\lam_j)=\w_j$ for any $j=1,2$. 
\end{lem}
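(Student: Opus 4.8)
The plan is to reduce this modified two-point Pick problem for the Bergman kernel to the classical two-point Pick problem for the Schur class, using Proposition~\ref{lem:1-2} as the translating device. By Proposition~\ref{lem:1-2}, the matrix $M_{\Phi}(\lam_1,\lam_2;\w_1,\w_2)$ is positive semi-definite if and only if $\rho(\w_1,\w_2)\leq\rho(\lam_1,\lam_2)$. On the other hand, by the explicit formula \eqref{eq:1-1}, the function $t\mapsto 2t^2-t^4$ is strictly increasing on $[0,1]$, so $\rho(\w_1,\w_2)\leq\rho(\lam_1,\lam_2)$ is equivalent to the pseudohyperbolic inequality
\[
\left|\dfrac{\w_1-\w_2}{1-\overline{\w_2}\w_1}\right|\leq\left|\dfrac{\lam_1-\lam_2}{1-\overline{\lam_2}\lam_1}\right|.
\]
Thus the statement to be proved becomes: this pseudohyperbolic inequality holds if and only if there is an $f\in\cS$ with $f(\lam_j)=\w_j$.

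For the ``if'' direction, given $f\in\cS$ with $f(\lam_j)=\w_j$, the Schwarz--Pick inequality immediately yields the pseudohyperbolic inequality above (this is exactly the inequality $X_f\leq X$ used in the proof of Proposition~\ref{prop:1-1}), hence $M_{\Phi}$ is positive semi-definite by Proposition~\ref{lem:1-2}. For the ``only if'' direction, suppose $M_{\Phi}$ is positive semi-definite, equivalently the pseudohyperbolic distance from $\w_1$ to $\w_2$ does not exceed that from $\lam_1$ to $\lam_2$. Precompose and postcompose with disk automorphisms $\psi,\chi\in\aut(\D)$ chosen so that $\psi(0)=\lam_1$ and $\chi(\w_1)=0$; since both automorphisms are $\rho$-isometries and pseudohyperbolic isometries, and since $\lam_1\neq\lam_2$, we are reduced to finding $g\in\cS$ with $g(0)=0$ and $g(p)=q$, where $p=\psi^{-1}(\lam_2)\neq 0$ and $q=\chi(\w_2)$ satisfy $|q|\leq|p|$. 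By the Schwarz lemma applied in reverse, one simply takes $g(z)=(q/p)z$, which lies in $\cS$ because $|q/p|\leq 1$; then $f=\chi^{-1}\circ g\circ\psi^{-1}$ does the job. (If one prefers to avoid the normalization, the classical Nevanlinna--Pick criterion for two points states directly that such $f\in\cS$ exists iff the Pick matrix $\bigl[(1-\overline{\w_i}\w_j)/(1-\overline{\lam_i}\lam_j)\bigr]_{i,j=1}^2$ is positive semi-definite, and a short computation shows this $2\times 2$ positivity is equivalent to the pseudohyperbolic inequality.)

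I expect no serious obstacle here; the only point requiring a little care is bookkeeping the equivalences. Specifically, one must check that the strict monotonicity of $t\mapsto 2t^2-t^4$ on $[0,1]$ legitimately converts $\rho(\w_1,\w_2)\leq\rho(\lam_1,\lam_2)$ into the pseudohyperbolic inequality in both directions, and one must use the hypothesis $\lam_1\neq\lam_2$ to divide by $p$ in the construction of $g$ (when $\lam_1=\lam_2$ the ``only if'' direction would fail unless $\w_1=\w_2$, which is why distinctness is assumed). The rest is a direct appeal to Proposition~\ref{lem:1-2}, the Schwarz--Pick inequality already invoked in the proof of Proposition~\ref{prop:1-1}, and the elementary two-point case of Nevanlinna--Pick interpolation.
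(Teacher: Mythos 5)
Your proof is correct and follows essentially the same route as the paper: both reduce positive semi-definiteness of $M_{\Phi}$ via Proposition~\ref{lem:1-2} to $\rho(\w_1,\w_2)\leq\rho(\lam_1,\lam_2)$, convert that to the pseudohyperbolic inequality (your monotonicity of $t\mapsto 2t^2-t^4$ is the same observation as the paper's factorization $(X^2-X_f^2)(2-X^2-X_f^2)$), and then settle the interpolation by the classical two-point Schwarz--Pick argument, which you usefully make explicit where the paper leaves it implicit. The only cosmetic difference is in the ``if'' direction, where the paper notes that $M_{\Phi}$ is the Schur product of the ordinary Pick matrix with itself and applies Schur's product theorem, while you route back through Schwarz--Pick and Proposition~\ref{lem:1-2}; both are valid.
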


\begin{proof}
We set $M_{\Phi}=M_{\Phi}(\lam_1,\lam_2;\w_1,\w_2)$. 
Suppose that $M_{\Phi}$ is positive semi-definite. 
Then, by Proposition \ref{lem:1-2}, we have that $\rho(\w_1,\w_2) \leq \rho (\lam_1,\lam_2)$. 
Moreover, by the same calculation as in the proof of Proposition \ref{prop:1-1}, we have that
\[
\left|\dfrac{\w_1-\w_2}{1-\w_1\overline{\w_2}}\right|
\leq\left|\dfrac{\lam_1-\lam_2}{1-\lam_1\overline{\lam_2}}\right|.
\]
This concludes the only if part.  
Since $M_{\Phi}$ is the square of a Pick matrix with respect to the Schur product, 
the if part is trivial by Schur's theorem. 
\end{proof}

A matrix $T$ is called a contraction on $\cK$ if $\la T\bz,T\bz \ra_{\cK}\leq \la\bz,\bz \ra_{\cK}$ for any $\bz$ in $\cK$ 
(see p.\ 150 in Dritschel--Rovnyak~\cite{DR})
We note that $T$ is a contraction on $\cK$ if and only if 
\begin{equation}\label{eq:3-1}
\begin{pmatrix}
\la T\blam_i, T\blam_j \ra_{\cK}
\end{pmatrix}_{1\leq i,j \leq n}
\leq 
\begin{pmatrix}
\la \blam_i,\blam_j \ra_{\cK}
\end{pmatrix}_{1\leq i,j \leq n}
\end{equation}
for any $n$ in $\mathbb N$ and $\blam_1,\ldots,\blam_n$ in $\cK$.  
Further, $\cR(\Omega)$ denotes the set of all rational self-maps of $\Omega$. 
A typical example of an element of $\cR(\Omega)$ is $\varphi_{\ba}$ discussed 
in Section 3. 
We define the following class: 
\[
\cR\cS_{\Phi}(\Omega)=\left\{F\in \cR(\Omega): \dfrac{1-\la F\circ \Phi(z), F\circ \Phi(\lam)\ra_{\cK}}
{1-\la \Phi(z),\Phi(\lam) \ra_{\cK}}\ \text{is positive semi-definite in $\D^2$}\right\}. 
\]
\begin{ex}\label{ex:5-2}\rm
Let $T$ be any contraction on $\cK$. 
Then, $T\varphi_{\ba}$ belongs to $\cR\cS_{\Phi}(\Omega)$. 
Indeed, 
it follows from 
(\ref{eq:3-1}), Schur's theorem and (ii) in Proposition \ref{prop:1-3} that 
\begin{align*}
\begin{pmatrix}\dfrac{1-\la T\varphi_{\ba}\circ \Phi(\lam_i), T\varphi_{\ba}\circ \Phi(\lam_j)\ra_{\cK}}
{1-\la \Phi(\lam_i),\Phi(\lam_j) \ra_{\cK}}
\end{pmatrix}_{1\leq i,j \leq n}
&\geq\begin{pmatrix}\dfrac{(1-\la \varphi_{\ba}\circ \Phi(\lam_i),\varphi_{\ba}\circ \Phi(\lam_j) \ra_{\cK})}
{1-\la \Phi(\lam_i),\Phi(\lam_j) \ra_{\cK}}
\end{pmatrix}_{1\leq i,j \leq n}\\
&=\begin{pmatrix}\dfrac{(1-\la \ba,\ba \ra_{\cK})}{(1-\la \Phi(\lam_i), \ba\ra_{\cK})(1-\la \ba,\Phi(\lam_j) \ra_{\cK})}
\end{pmatrix}_{1\leq i,j \leq n}.
\end{align*}
\end{ex}

The following is the main result of this paper. 
\begin{thm}\label{thm:4-1}
Let $\lam_1$ and $\lam_2$ be distinct two points in $\D$, and let $\w_1$ and $\w_2$ be points in $\D$. 
Then, 
the following conditions are equivalent.
\begin{enumerate}
\item $M_{\Phi}(\lam_1,\lam_2;\w_1,\w_2)$ is positive semi-definite.   
\item $P(\lam_1,\lam_2;\w_1,\w_2)$ is positive semi-definite, where $P(\lam_1,\lam_2;\w_1,\w_2)$ denotes the Pick matrix with respect to $(\lam_1,\lam_2)$ and $(\w_1, \w_2)$. 
\item $\rho(\w_1,\w_2)\leq \rho(\lam_1,\lam_2)$.
\item There exists a map $F$ in $\cR\cS_{\Phi}(\Omega)$  
such that $F(\Phi(\lam_j))=\Phi(\w_j)$ for any $j=1,2$. 
\end{enumerate}
\end{thm}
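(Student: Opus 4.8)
The plan is to prove the cycle of implications
$(\mathrm{i})\Rightarrow(\mathrm{ii})\Rightarrow(\mathrm{iii})\Rightarrow(\mathrm{iv})\Rightarrow(\mathrm{i})$,
and the only genuinely new content to be supplied is the construction in $(\mathrm{iii})\Rightarrow(\mathrm{iv})$; the remaining arrows are quick consequences of material already in the paper.

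First, the equivalence $(\mathrm{i})\Leftrightarrow(\mathrm{iii})$ is exactly Proposition~\ref{lem:1-2}, so nothing new is needed there. For $(\mathrm{i})\Leftrightarrow(\mathrm{ii})$ I would observe, as in the proof of Lemma~\ref{thm:1-4}, that $M_{\Phi}(\lam_1,\lam_2;\w_1,\w_2)$ is the entrywise (Schur) square of the Pick matrix $P(\lam_1,\lam_2;\w_1,\w_2)=\bigl((1-\overline{\w_j}\w_i)/(1-\overline{\lam_j}\lam_i)\bigr)_{i,j}$: by Schur's theorem the square of a positive semi-definite matrix is positive semi-definite, giving $(\mathrm{ii})\Rightarrow(\mathrm{i})$ at once, while for a $2\times 2$ matrix with nonnegative diagonal, $\det(A^{\circ 2})\ge 0$ is equivalent to $\det A\ge0$ (because $|a_{12}|^4\le a_{11}^2a_{22}^2$ iff $|a_{12}|^2\le a_{11}a_{22}$), which with the nonnegativity of diagonal entries gives $(\mathrm{i})\Rightarrow(\mathrm{ii})$. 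Finally $(\mathrm{iv})\Rightarrow(\mathrm{i})$ is immediate: if $F\in\cR\cS_{\Phi}(\Omega)$ interpolates, then restricting the positive semi-definite kernel $\bigl(1-\la F\Phi(z),F\Phi(\lam)\ra_{\cK}\bigr)/\bigl(1-\la\Phi(z),\Phi(\lam)\ra_{\cK}\bigr)$ to the two points $\lam_1,\lam_2$ and using $F(\Phi(\lam_j))=\Phi(\w_j)$ yields precisely $M_{\Phi}(\lam_1,\lam_2;\w_1,\w_2)\ge0$.

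The heart of the matter is $(\mathrm{iii})\Rightarrow(\mathrm{iv})$, and here the strategy is to reduce to the model element of $\cR\cS_{\Phi}(\Omega)$ exhibited in Example~\ref{ex:5-2}, namely $T\varphi_{\ba}$ with $T$ a contraction on $\cK$. Put $\ba=\Phi(\lam_1)$, $\bb=\Phi(\lam_2)$, $\ba'=\Phi(\w_1)$, $\bb'=\Phi(\w_2)$; note all four are nonneutral vectors in $\Omega$ (since $\la\Phi(\mu),\Phi(\mu)\ra_{\cK}=2|\mu|^2-|\mu|^4<1$ and $\neq 1$ for $\mu\in\D$). Apply $\varphi_{\ba}$ and $\varphi_{\ba'}$ to move the first interpolation nodes to $\0$: by Proposition~\ref{prop:1-3}(i), $\varphi_{\ba}(\ba)=\0$ and $\varphi_{\ba'}(\ba')=\0$, and by Proposition~\ref{prop:1-3}(vi) together with hypothesis $(\mathrm{iii})$,
\[
\la\varphi_{\ba'}(\bb'),\varphi_{\ba'}(\bb')\ra_{\cK}=\rho(\w_1,\w_2)^2\le\rho(\lam_1,\lam_2)^2=\la\varphi_{\ba}(\bb),\varphi_{\ba}(\bb)\ra_{\cK}.
\]
Writing $\bc=\varphi_{\ba}(\bb)$ and $\bc'=\varphi_{\ba'}(\bb')$, I then need a contraction $T$ on $\cK$ with $T\bc=\bc'$ (and $T\0=\0$, automatic). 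Such a $T$ exists precisely because $\la\bc',\bc'\ra_{\cK}\le\la\bc,\bc\ra_{\cK}$: if $\bc$ is nonneutral one may take $T$ to act as the appropriate scalar on $\Span\{\bc\}$ and suitably on a complement so that condition (\ref{eq:3-1}) holds; the degenerate/neutral cases ($\bc=\0$, or $\la\bc,\bc\ra_{\cK}=0$) are handled separately and are where I expect to spend the most care. Then set
\[
F=\varphi_{\ba'}\circ T\circ\varphi_{\ba},
\]
which is rational. One checks $F(\ba)=\varphi_{\ba'}(T\0)=\varphi_{\ba'}(\0)=\ba'$ and $F(\bb)=\varphi_{\ba'}(T\bc)=\varphi_{\ba'}(\bc')=\bb'$, giving the interpolation $(1)$. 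For membership in $\cR\cS_{\Phi}(\Omega)$ one computes, using Proposition~\ref{prop:1-3}(ii) twice and the contractivity of $T$ via (\ref{eq:3-1}) and Schur's theorem, that
\[
\left(\frac{1-\la F\Phi(z),F\Phi(\lam)\ra_{\cK}}{1-\la\Phi(z),\Phi(\lam)\ra_{\cK}}\right)
=\left(\frac{1-\la T\varphi_{\ba}\Phi(z),T\varphi_{\ba}\Phi(\lam)\ra_{\cK}}{1-\la\varphi_{\ba}\Phi(z),\varphi_{\ba}\Phi(\lam)\ra_{\cK}}\right)
\cdot(\text{positive factors from }\varphi_{\ba},\varphi_{\ba'}),
\]
exactly as in Example~\ref{ex:5-2} (the $\varphi_{\ba'}$ on the outside contributes a positive semi-definite Schur factor by Proposition~\ref{prop:1-3}(ii), and the $\varphi_{\ba}$ on the inside cancels the denominator up to such a factor).

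The main obstacle, as indicated, is the existence of a $\cK$-contraction $T$ carrying $\bc$ to $\bc'$ when $\la\bc',\bc'\ra_{\cK}\le\la\bc,\bc\ra_{\cK}$, handling uniformly the cases where these inner products are positive, zero, or negative (recall $\Omega$ contains vectors of all three signatures). In the positive-positive case a rescaling works; when $\bc$ is neutral nonzero one must be more careful since $\Span\{\bc\}$ is a degenerate subspace, and when $\la\bc,\bc\ra_{\cK}<0$ one is mapping into the "negative" part and again a scalar multiple on $\bc$ together with an isometry on a $\cK$-orthogonal complement suffices. A secondary technical point is that $F=\varphi_{\ba'}\circ T\circ\varphi_{\ba}$ is only a rational (not everywhere-defined) self-map of $\Omega$, so one should remark — as the paper already does for $\varphi_{\ba}$ in Theorem~\ref{thm:2-2} — that the kernel positivity computation is valid on a dense subset of $\D^2$ and extends by continuity, which is harmless since positive semi-definiteness is a closed condition.
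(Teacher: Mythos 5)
Your proposal is correct and takes essentially the same route as the paper: (i)$\Leftrightarrow$(ii)$\Leftrightarrow$(iii) via Proposition~\ref{lem:1-2} and the Schur-square observation, and (iii)$\Rightarrow$(iv) by conjugating a linear $\cK$-contraction $T$ sending $\bc=\varphi_{\Phi(\lam_i)}(\Phi(\lam_j))$ to $\bc'=\varphi_{\Phi(\w_i)}(\Phi(\w_j))$ by the maps $\varphi_{\ba}$ (the paper centers at $\lam_2,\w_2$ rather than $\lam_1,\w_1$, which is immaterial). The neutral-vector difficulty you flag in constructing $T$ never actually arises: by Proposition~\ref{prop:1-3}(vi), $\la \bc,\bc\ra_{\cK}=\rho(\lam_1,\lam_2)^2>0$ because $\lam_1\neq\lam_2$ (and $\bc'$ is neutral only when it equals $\0$), so the paper can simply write the explicit contraction $T\bz=\la \bz,\bc\ra_{\cK}\,\bc'/\la\bc,\bc\ra_{\cK}-\la\bz,\bmu\ra_{\cK}\,\bnu$ with $\bmu\in[\bc]^{[\perp]}$, $\bnu\in[\bc']^{[\perp]}$ and $\la\bmu,\bmu\ra_{\cK}=\la\bnu,\bnu\ra_{\cK}=-1$, which is exactly the ``rescale on the span, isometry on the $\cK$-orthogonal complement'' map you describe.
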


\begin{proof}
By Proposition \ref{lem:1-2} and Lemma \ref{thm:1-4}, (i), (ii) and (iii) are equivalent. 
We shall show that (i) and (iv) are equivalent. 
Suppose that $M_{\Phi}(\lam_1,\lam_2;\w_1,\w_2)$ is positive semi-definite. 
We set $\blam=\varphi_{\Phi(\lam_2)}(\Phi(\lam_1))$ and $\bw=\varphi_{\Phi(\w_2)}(\Phi(\w_1))$. 
Then, by (vi) of Proposition \ref{prop:1-3} and Proposition \ref{lem:1-2}, we have that 
\begin{align*}
\la \bw,\bw\ra_{\cK}
&=\la \varphi_{\Phi(\w_2)}(\Phi(\w_1)), \varphi_{\Phi(\w_2)}(\Phi(\w_1)) \ra_{\cK}\\
&=\rho(\w_1,\w_2)^2\\
&\leq \rho(\lam_1,\lam_2)^2\\
&=\la \varphi_{\Phi(\lam_2)}(\Phi(\lam_1)), \varphi_{\Phi(\lam_2)}(\Phi(\lam_1)) \ra_{\cK}\\
&=\la \blam,\blam \ra_{\cK}.
\end{align*}
In what follows, 
we write $[\bx]^{[\perp]}=\{\by\in \cK:\la \bx,\by \ra_{\cK}=0\}$ for any vector $\bx$ in $\cK$. 
We set 
\[
T\bz=\dfrac{1}{\la\blam,\blam\ra_{\cK}}\la \bz,\blam\ra_{\cK}\bw-\la \bz,\bmu \ra_{\cK} \bnu,
\]
where $\bmu$ and $\bnu$ are chosen so that 
$\bmu\in [\blam]^{[\perp]}$, $\bnu\in [\bw]^{[\perp]}$ and $\la \bmu, \bmu\ra_{\cK}, \la \bnu, \bnu\ra_{\cK}=-1$.
Then, $T\blam=\bw$ and $T$ is a contraction on $\cK$. 
Setting $F=\varphi_{\Phi(\w_2)}\circ T \circ \varphi_{\Phi(\lam_2)}$, we have that $F(\Phi(\lam_j))=\Phi(\w_j)$ for any $j=1,2$. 
Further, it follows from (ii) of Proposition \ref{prop:1-3} and (\ref{eq:3-1}) that
\[
\dfrac{1-\la F\circ \Phi(z), F\circ \Phi(\lam)\ra_{\cK}}
{1-\la \Phi(z),\Phi(\lam) \ra_{\cK}}
=
\dfrac{1-\la 
\varphi_{\Phi(\w_2)}\circ T \circ \varphi_{\Phi(\lam_2)}\circ\Phi(z), \varphi_{\Phi(\w_2)}\circ T \circ \varphi_{\Phi(\lam_2)} \circ\Phi(\lam)\ra_{\cK}}
{1-\la \Phi(z),\Phi(\lam) \ra_{\cK}}
\]
is positive semi-definite (see also Example \ref{ex:5-2}). 
Hence, we conclude that (i) implies (iv). 
The converse is trivial by the definition of $\cR\cS_{\Phi}(\Omega)$. 
\end{proof}

Diligent readers would have found already that 
$M_{\Phi}(\lam_1,\lam_2;\mu_1,\mu_2)$ is the Schur product of two Pick matrices 
$P(\lam_1,\lam_2;\mu_1,\mu_2)$. 
Let $P$ be a two-by-two Pick matrix. Then, 
as a corollary of Theorem \ref{thm:4-1}, we obtain that 
$P\odot P$ is positive semi-definite if and only if so is $P$, 
where $\odot$ denotes the Schur product.
However, the positive semi-definiteness of $P\odot P$ of a three-by-three Pick matrix $P$ does not imply that of $P$.
To see this, we consider the following example.
\begin{ex}\rm 
We set $\lam_1=2/3, \lam_2=3/4, \w_1=1/3, \w_2=1/4$. 
Then, the Pick matrix 
with respect to $\lam_1\mapsto \w_1$, $\lam_2\mapsto \w_2$, $0\mapsto 0$ is
\[
\begin{pmatrix}
\dfrac{1-|\w_1|^2}{1-|\lam_1|^2}&\dfrac{1-\w_1\overline{\w_2}}{1-\lam_1\overline{\lam_2}}&1\\[0.5cm]
\dfrac{1-\w_2\overline{\w_1}}{1-\lam_2\overline{\lam_1}}&\dfrac{1-|\w_2|^2}{1-|\lam_2|^2}&1\\[0.5cm]
1&1&1
\end{pmatrix}
=\begin{pmatrix}
\dfrac{8}{5}&\dfrac{11}{6}&1\\[0.5cm]
\dfrac{11}{6}&\dfrac{15}{7}&1\\[0.5cm]
1&1&1
\end{pmatrix},
\]
which is not positive semi-definite. In fact,
\[
\det
\begin{pmatrix}
\dfrac{8}{5}&\dfrac{11}{6}&1\\[0.5cm]
\dfrac{11}{6}&\dfrac{15}{7}&1\\[0.5cm]
1&1&1
\end{pmatrix}
=-\frac{11}{1260}<0.
\]
However, 
\begin{equation*}
\displaystyle
\begin{pmatrix}
\left(\dfrac{1-|\w_1|^2}{1-|\lam_1|^2}\right)^2&\left(\dfrac{1-\w_1\overline{\w_2}}{1-\lam_1\overline{\lam_2}}\right)^2&1\\[0.5cm]
\left(\dfrac{1-\w_2\overline{\w_1}}{1-\lam_2\overline{\lam_1}}\right)^2&\left(\dfrac{1-|\w_2|^2}{1-|\lam_2|^2}\right)^2&1\\[0.5cm]
1&1&1
\end{pmatrix}=
\begin{pmatrix}
\dfrac{64}{25}&\dfrac{121}{36}&1\\[0.5cm]
\dfrac{121}{36}&\dfrac{225}{49}&1\\[0.5cm]
1&1&1
\end{pmatrix}
\end{equation*}
is positive definite. 
Indeed, 
\[
\det 
\begin{pmatrix}
\dfrac{64}{25}&\dfrac{121}{36}\\[0.5cm]
\dfrac{121}{36}&\dfrac{225}{49}
\end{pmatrix}
=\dfrac{29087}{63504}>0\quad \mbox{and}\quad
\det
\begin{pmatrix}
\dfrac{64}{25}&\dfrac{121}{36}&1\\[0.5cm]
\dfrac{121}{36}&\dfrac{225}{49}&1\\[0.5cm]
1&1&1
\end{pmatrix}
=\dfrac{45119}{1587600}>0.
\]
\end{ex}

\begin{acknowledgment}\rm
The authors would like to thank 
Professor Jaydeb Sarkar (Indian Statistical Institute) 
for his suggestion which led us to this research. 
The third author would like to thank Professor Humihiko Watanabe (National Defense Academy of Japan) 
for his advice on algebraic geometry. 
This work was supported by JSPS KAKENHI Grant Number JP24K06771.
\end{acknowledgment}

\end{document}